 \newtheorem{theorem}{Theorem}
\newtheorem{lemma}[theorem]{Lemma}
\theoremstyle{definition}
\theoremstyle{remark}
\newtheorem{remark}[theorem]{Remark}
\newcommand\cF{\mathcal{F}}
\newcommand\cG{\mathcal{G}}
\newcommand\cH{\mathcal{H}}
\newcommand\cP{\mathcal{P}}
\newcommand\cS{\mathcal{S}}
\newcommand\bE{\mathbb{E}}
\newcommand\bG{\mathbb{G}}
\newcommand\bN{\mathbb{N}}
\newcommand\bR{\mathbb{R}}
\DeclareMathOperator{\co}{co}
\DeclareMathOperator{\fcc}{fcc}
\begin{document}

\title{Differentiation of measures on a non-separable space, and the  Radon-Nikodym theorem  \medskip\\
}
\subjclass[2010]{28A15, 28A25.}
\author{Oleksii Mostovyi}
  \author{Pietro Siorpaes}
 
 \address{Oleskii Mostovyi, 
University of Connecticut \\
Department of Mathematics\\
Storrs, CT 06269, United States
}
\email{oleksii.mostovyi@uconn.edu}
\address{Pietro Siorpaes: Department of Mathematics, Imperial College London, London SW7 2AZ, United Kingdom}
\email{p.siorpaes@imperial.ac.uk}

\begin{abstract} 
Given positive measures $\nu,\mu$ on an arbitrary measurable space $(\Omega, \cF)$, we \emph{construct a sequence} of finite partitions $(\pi_n)_n$ of $(\Omega, \cF)$ s.t. 
$$ \sum_{A\in \pi_n: \mu(A)>0} 1_{A} \frac{\nu(A)}{\mu(A)} \longrightarrow  \frac{d\nu^a}{d\mu} \quad \mu \text{ a.e. as } n\to \infty .
$$
As an application, we modify the probabilistic proof of the Radon-Nikodym Theorem so that it uses convergence along a properly chosen sequence (instead of along a net), and so that it does not rely on  the martingale convergence theorem (nor any probability theory),  obtaining a  completely elementary proof. 
\end{abstract} 
\maketitle

The following theorem, due to Lebesgue\footnote{Although this is commonly referred to as the Radon-Nikodym theorem, the first version of the existence of the density of a measure on $\bR^n$ absolutely continuous with respect to the Lebesgue measure, is due to  Lebesgue; Radon extended this result  to Radon measures, and Nikodym to general measures (see \cite[footnote 18, p. 155]{DiDe02}). Moreover, the existence of the decomposition $\nu=\nu^a+\nu^s$ is also due to Lebesgue.}, Radon and Nikodym, has been called  `probably the most important theorem in measure theory' in the classic   book \cite{Ru06}.
\begin{theorem}
\label{RN}
Given finite positive (sigma-additive) measures $\nu,\mu$ on a measurable space $(\Omega, \cF)$, there exists unique  positive measures $\nu^a, \nu^s$ s.t. $\nu=\nu^a+\nu^s$, $\nu^a \ll \mu$ and $\nu^s\perp \mu$, and there exists unique  $f\in L^1(\mu):=L^1(\Omega, \cF, \mu)$ s.t. $\nu^a=f \cdot \mu$.
\end{theorem}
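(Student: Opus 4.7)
The plan is to derive Theorem~\ref{RN} directly from the sequence-of-partitions construction announced in the abstract. Given the constructed sequence $(\pi_n)$, I set
\[
f_n := \sum_{A \in \pi_n,\,\mu(A)>0} 1_A \frac{\nu(A)}{\mu(A)},
\]
so that by the construction $f_n \to f$ $\mu$-a.e.\ for some measurable $f \ge 0$. Since $\int f_n\,d\mu = \sum_{A\in\pi_n,\,\mu(A)>0}\nu(A) \le \nu(\Omega) < \infty$, Fatou's lemma gives $f \in L^1(\mu)$. I would then define $\nu^a := f \cdot \mu$, so that $\nu^a \ll \mu$ is automatic, and $\nu^s := \nu - \nu^a$.

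Uniqueness I would dispatch first: if $\nu = \tilde\nu^a + \tilde\nu^s$ is another such decomposition, then $\nu^a - \tilde\nu^a = \tilde\nu^s - \nu^s$ is simultaneously absolutely continuous with and singular to $\mu$, hence zero; the $\mu$-a.e.\ uniqueness of $f$ is a standard consequence of the fact that two $\mu$-integrable densities inducing the same measure agree $\mu$-a.e.

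The substantive work is to show (i) $\nu^s$ is a positive measure and (ii) $\nu^s \perp \mu$. For (i), I would establish $\int_B f\,d\mu \le \nu(B)$ for every $B \in \cF$ by combining the atom-wise identity $\int_A f_n\,d\mu = \nu(A)$ for $A \in \pi_n$ with $\mu(A)>0$ with a Fatou-type passage to the limit along sets $B$ approximated by unions of atoms of the $(\pi_n)$. For (ii), which I expect to be the main obstacle, a clean route is a maximality argument: show that $f$ achieves $\sup\{\int g\,d\mu : 0 \le g \in L^1(\mu),\, g\mu \le \nu\}$. Given maximality, if $\nu^s$ failed to be singular with respect to $\mu$, a Hahn-type decomposition of $\nu^s - \ve\mu$ for small $\ve > 0$ would produce a set $P_\ve$ with $\mu(P_\ve)>0$ on which $\nu^s \ge \ve\mu$; then $f + \ve 1_{P_\ve}$ would satisfy $(f + \ve 1_{P_\ve})\mu \le \nu$ with strictly larger $\mu$-integral than $f$, contradicting maximality. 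Verifying maximality of the limit $f$---thereby converting $\mu$-a.e.\ convergence of the $f_n$ into this optimality property---is the role of the specific choice of $(\pi_n)$, and is the point where the paper's elementary approach replaces the martingale convergence theorem.
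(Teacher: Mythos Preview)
Your proposal has a genuine gap at step (i), and it is precisely the difficulty this paper is written to overcome. You write that $\int_B f\,d\mu \le \nu(B)$ will follow from the atom-wise identity $\int_A f_n\,d\mu = \nu(A)$ for $A\in\pi_n$ ``with a Fatou-type passage to the limit along sets $B$ approximated by unions of atoms of the $(\pi_n)$.'' But for an arbitrary $B\in\cF$ there is in general \emph{no} such approximation: the $\sigma$-algebra $\tilde\cF:=\sigma(\cup_n\pi_n)$ is countably generated, and when $\cF$ is not separable the inclusion $\tilde\cF\subsetneq\cF$ is strict. All that the identities $\int_A f_n\,d\mu=\nu(A)$ for $A\in\sigma(\pi_n)$ can give you in the limit is information about $\int_B f\,d\mu$ for $B\in\tilde\cF$, not for general $B\in\cF$. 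Your step (ii) then inherits this gap: you never establish that $f\in L(\mu,\nu)$, and the maximality you appeal to is asserted rather than proved (``is the role of the specific choice of $(\pi_n)$'' is not an argument).

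The paper's mechanism for reaching an arbitrary $A\in\cF$ is different, and you have not found it. After reducing to the case $\nu\le\mu$ (so the $f_\pi$ are $[0,1]$-valued), it fixes $A$ and compares the sequence $(f_{i_n})_n$ with the refined sequence $(\tilde f_{i_n})_n:=(f_{i_n\vee A})_n$, passing to forward convex combinations to get limits $f$ and $\tilde f$. Because $A\in\sigma(i_n\vee A)$, one has $\int_A\tilde f\,d\mu=\nu(A)$ for free. The crucial step is then to show $\tilde f=f$: this uses that the chosen $(i_n)$ asymptotically maximizes $\pi\mapsto\int e^{-f_\pi}\,d\mu$, together with a strict Jensen inequality (equality in $\int e^{-\bE[g|\cG]}\,d\mu\le\int e^{-g}\,d\mu$ forces $g=\bE[g|\cG]$). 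No approximation of $A$ by $\pi_n$-atoms, no maximality in $L(\mu,\nu)$, and no Hahn decomposition are used; the Lebesgue decomposition $\nu=\nu^a+\nu^s$ is obtained beforehand from the density $h=d\nu/d(\mu+\nu)$ via $\{h=1\}$. Finally, note that you take the $\mu$-a.e.\ convergence $f_{\pi_n}\to f$ as given from the abstract, but in the paper that statement (Theorem~\ref{thm: conv seq}) is proved \emph{after} and \emph{using} Theorem~\ref{RN}; the proof of Theorem~\ref{RN} itself only uses $L^2$-convergence of a forward convex combination, which is elementary.
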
 
To clarify, we denote with $f \cdot \mu$ the measure defined by 
$$(f  \cdot \mu)(A):=\int_A f d\mu \text{ for all } A\in \cF .$$
 Of course Theorem \ref{RN} admits  variants for the cases of real, complex, and sigma-finite  measures, which readily follow from the statement above. There are also more exotic extensions, for example \cite{ha13} goes somewhat beyond $\sigma$-finiteness, and \cite[Chapter 5]{EdSu92} considers Banach-valued $\nu$.
 
A way to \emph{construct} the function $f=\frac{d\nu^a}{d\mu}$ if $\Omega=\bR^N$ is using the following classical theorem of differentiation of measures (see \cite[Chapter 1, Section 6]{EvGa18}, which calls it `the fundamental theorem of calculus for Radon measures in $\bR^n$').
\begin{theorem}
\label{thm: diff meas}
Given $\mu,\nu$  positive Borel measures on $\bR^n$, finite on compacts, let\footnote{Here we use the convention that  $h_{\epsilon}(x):=\infty$ for all $x$ for which  $\mu(B_\epsilon(x))=0$.}  
\begin{align*}
h_{\epsilon}(x):=\frac{\nu(B_\epsilon(x))}{\mu(B_\epsilon(x))}  , \quad \text{ for } B_\epsilon(x):=\{ y\in \bR^n: ||y-x||\leq \epsilon\} .
\end{align*} 
Then   $ h_{\epsilon}\to \frac{d\nu^a}{d\mu}$ $\mu$ a.e. as $\epsilon\downarrow 0$.
\end{theorem}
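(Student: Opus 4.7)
The approach is classical. Localising to large balls we may assume $\mu,\nu$ are finite, and by Theorem \ref{RN} we decompose $\nu=\nu^a+\nu^s$ with $\nu^a=f\cdot\mu$ for some $f\in L^1(\mu)$. Splitting $h_\epsilon=h^a_\epsilon+h^s_\epsilon$ accordingly, I would prove separately that $h^a_\epsilon\to f$ and $h^s_\epsilon\to 0$ hold $\mu$-a.e.

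The technical heart is a weak-type maximal inequality: for a finite positive Borel measure $\sigma$ on $\bR^n$, setting
\[ M\sigma(x):=\sup_{\epsilon>0}\frac{\sigma(B_\epsilon(x))}{\mu(B_\epsilon(x))}, \]
one has $\mu(\{M\sigma>\lambda\})\leq C_n\sigma(\bR^n)/\lambda$ for a dimensional constant $C_n$. I would prove this by applying the Besicovitch covering theorem (the step where the general, possibly non-doubling $\mu$ forces us out of Vitali's easier framework) to the collection of balls realising $\sigma(B)/\mu(B)>\lambda$ and centred at the points of $\{M\sigma>\lambda\}$: splitting the cover into $C_n$ countable disjoint subfamilies and summing gives the estimate. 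This covering argument is the main obstacle, since everything else is formal.

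Armed with the maximal inequality, $h^a_\epsilon\to f$ follows by a density argument: for $g$ continuous and compactly supported the pointwise convergence $h^g_\epsilon\to g$ is immediate from continuity of $g$; for general $f\in L^1(\mu)$, approximate $f$ by such $g$ in $L^1(\mu)$ and observe the inclusion
\[ \{|\limsup_\epsilon h^a_\epsilon-f|>2\lambda\}\ \subseteq\ \{M(|f-g|\cdot\mu)>\lambda\}\cup\{|f-g|>\lambda\}, \]
whose $\mu$-measure is bounded by $(C_n+1)\|f-g\|_{L^1(\mu)}/\lambda$ via the maximal inequality and Chebyshev respectively; sending $\|f-g\|_{L^1(\mu)}\downarrow 0$ and then $\lambda\downarrow 0$ concludes this case.

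For the singular part, since $\nu^s\perp\mu$ choose a Borel set $E$ with $\mu(E)=0$ and $\nu^s(\bR^n\setminus E)=0$. For $\delta>0$, inner regularity of $\nu^s$ yields a compact $K\subseteq E$ with $\nu^s(E\setminus K)<\delta$; set $\sigma:=\nu^s-\nu^s|_K$, so $\sigma(\bR^n)<\delta$. For $x\notin K$ (and $\mu(K)=0$) the distance to the closed set $K$ is positive, hence $\nu^s|_K(B_\epsilon(x))=0$ for all small $\epsilon$, so $\limsup_\epsilon h^s_\epsilon(x)\leq M\sigma(x)$. The maximal inequality then gives $\mu(\{\limsup_\epsilon h^s_\epsilon>\lambda\})\leq C_n\delta/\lambda$; sending $\delta\downarrow 0$ and then $\lambda\downarrow 0$ completes the proof.
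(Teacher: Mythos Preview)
The paper does not actually prove Theorem~\ref{thm: diff meas}: it is quoted as a classical result, with a pointer to \cite[Chapter 1, Section 6]{EvGa18}, and serves only as motivation for the paper's main contribution (Theorem~\ref{thm: conv seq}). So there is no ``paper's own proof'' to compare against.

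That said, your outline is correct and is precisely the standard argument one finds in the cited reference: Besicovitch covering (needed because $\mu$ is a general Radon measure, not necessarily doubling) yields the weak-$(1,1)$ maximal inequality for $M\sigma$; then the absolutely continuous part is handled by approximating $f$ in $L^1(\mu)$ by continuous compactly supported functions and using the maximal inequality plus Chebyshev; and the singular part is handled by excising a compact $K\subseteq E$ carrying almost all of $\nu^s$ and bounding the remainder via the maximal inequality again. All the steps you list go through as stated; the only point worth flagging is that the inner regularity you invoke for $\nu^s$ does hold here, since a finite Borel measure on $\bR^n$ is automatically Radon.
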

Analogously, if $(\Omega,\cF)$ is \emph{separable}, i.e. there exists a sequence of  sets $(B_j)_{j\in \bN}$ s.t. $\cF=\sigma((B_j)_{j\in \bN})$, and $\pi_n$ is  the\footnote{The elements of $\pi_n$ are the atoms of $\sigma((B_j)_{j=0}^n)$, and are the sets of the form $\cap_{j=0}^n C_j$ where $C_j\in \{B_j, \Omega \setminus B_j\}$.} 
partition  of $\Omega$ s.t. $\sigma(\pi_n)=\sigma((B_j)_{j=0}^n)$, then 
\begin{align}
\label{thm: mart diff}
f_{\pi_n}:=f_{\pi_n}(\mu):=\sum_{A\in \pi_n: \mu(A)>0} 1_{A} \frac{\nu(A)}{\mu(A)} \longrightarrow  \frac{d\nu^a}{d\mu} \quad \mu \text{ a.e. as } n\to \infty ,
\end{align} 
and if $\nu \ll \mu$ the convergence is also in $L^1(\mu)$ (even in $L^p$  if $\frac{d\nu}{d\mu}\in L^p$ and $p\in [1,\infty)$). 
This interesting fact, which seems unfortunately little known to  non-probabilists, closely resembles  Theorem \ref{thm: diff meas}: the main difference is that to build $f_{\pi_n}$ in \eqref{thm: mart diff} one evaluates $\frac{\nu(A_x)}{\mu(A_x)}$ at the set $A_x\in \pi_n$ which contains $x$, where the family $\pi_n$ is  \emph{fixed}, i.e. $\pi_n$ does not depend on $x$. 
The martingale-based method used to prove \eqref{thm: mart diff} can also be used to investigate what families of sets one can use in  Theorem \ref{thm: diff meas} instead of 
$(B_\epsilon(x))_{\epsilon>0, x\in \bR^n}$; for an exhaustive study of the topic of derivation and its relation to martingales one can consult  \cite{HaPa12}, and for a shorter and readable account of the most important results see  \cite[Chapter 7]{EdSu92}.

For arbitrary $(\Omega,\cF)$, it is also known that, if $\cP$ is the family of all finite partitions\footnote{I.e. any $\pi\in \cP$ if of the form $(A_j)_{j=0}^n$ for some $n\in \bN$, where the $A_j$'s are $\cF$-measurable, disjoint and their union is $\Omega$.} of $(\Omega,\cF)$, ordered by refinement, then the  net $(f_{\pi}(\mu))_{\pi\in \cP}$  converges to $\frac{d\nu}{d\mu}$ in $L^1(\mu)$ if $\nu \ll \mu$ (but not otherwise).

Our main contribution is then to generalize \eqref{thm: mart diff} to non-separable $(\Omega,\cF)$, 
by identifying a \emph{sequence} of partitions  $(\pi_n)_{n\in \bN}$ such that  $f_{\pi_n}(\mu)\to \frac{d\nu^a}{d\mu}$ $\mu$ a.e., as follows: 
\begin{theorem}
\label{thm: conv seq}
If  $\pi_n\in \cP, n\in \bN$ is increasing and chosen such that $(f_{\pi_n}(\gamma))_n$, defined via \eqref{thm: mart diff} using $\gamma:=\mu+\nu$, asymptotically maximizes\footnote{I.e. satisfies
$$ \sup_n \int_{\Omega} e^{-f_{\pi_n}(\gamma)} d\gamma=
 \sup_{\pi\in \cP} \int_{\Omega} e^{-f_{\pi}(\gamma)} d\gamma . $$} 
the function $\cP \ni \pi \mapsto \int_{\Omega} e^{-f_{\pi}(\gamma)} d\gamma$, then $f_{\pi_n}(\mu)\to  \frac{d\nu^a}{d\mu}$ $\mu$ a.e..
\end{theorem}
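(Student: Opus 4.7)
The plan is to work with the common reference measure $\gamma := \mu + \nu$ and the density $v := d\nu/d\gamma \in [0,1]$, so that $u := d\mu/d\gamma = 1 - v$. Set $g_n := f_{\pi_n}(\gamma)$, a $[0,1]$-valued $\sigma(\pi_n)$-measurable function; atomwise $f_{\pi_n}(\mu) = g_n/(1 - g_n)$ on atoms charging $\mu$, and $d\nu^a/d\mu = v/u$ $\mu$-a.e.\ (since $u > 0$ $\mu$-a.e.). Thus the claim reduces to showing $g_n \to v$ $\gamma$-a.e.

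The engine of the argument is the variational inequality
\begin{equation*}
\int_{\Omega} \bigl( f_{\pi'}(\gamma) - f_{\pi}(\gamma) \bigr)^2 \, d\gamma \leq 2e \left[ \int_{\Omega} e^{-f_{\pi'}(\gamma)} \, d\gamma - \int_{\Omega} e^{-f_{\pi}(\gamma)} \, d\gamma \right], \text{ whenever } \pi' \text{ refines } \pi,
\end{equation*}
which I would prove atomwise: on an atom $A$ of $\pi$ partitioned further into atoms $A_i$ of $\pi'$, set $q := \nu(A)/\gamma(A)$ and $q_i := \nu(A_i)/\gamma(A_i)$, so $q = \sum_i (\gamma(A_i)/\gamma(A)) q_i$. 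Second-order Taylor expansion of $e^{-x}$ around $q$ gives $e^{-q_i} - e^{-q} + e^{-q}(q_i - q) = \tfrac12 e^{-\xi_i}(q_i - q)^2 \geq \tfrac{1}{2e}(q_i - q)^2$; weighting by $\gamma(A_i)$ cancels the linear term (since $\sum_i \gamma(A_i) q_i = \nu(A) = \gamma(A) q$) and yields the inequality on $A$. Summing over atoms of $\pi$ finishes the proof.

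With the variational inequality in hand, much of the rest is mechanical. Let $V_n := \int_\Omega e^{-g_n} d\gamma$ and $V := \sup_{\pi \in \cP} \int_\Omega e^{-f_\pi(\gamma)} d\gamma$; Jensen's inequality makes the functional $\pi \mapsto \int e^{-f_\pi(\gamma)} d\gamma$ monotone under refinement, so $V_n$ is increasing, and by hypothesis $V_n \nearrow V$. Applying the variational inequality with $\pi_m$ refining $\pi_n$ (for $m \geq n$) yields $\int (g_m - g_n)^2 \, d\gamma \leq 2e(V_m - V_n) \to 0$, so $(g_n)$ is $L^2(\gamma)$-Cauchy with some limit $g$. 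To identify $g = v$, fix $A \in \cF$ and set $\pi_n^A := \pi_n \vee \{A, A^c\}$; then $\int_A f_{\pi_n^A}(\gamma) \, d\gamma = \nu(A)$ directly from the definition, while $f_{\pi_n^A}(\gamma) - g_n \to 0$ in $L^2(\gamma)$ by the variational inequality applied with $\pi_n^A$ refining $\pi_n$. Passing to the limit gives $\int_A g \, d\gamma = \nu(A)$ for every $A \in \cF$, whence $g = v$ $\gamma$-a.e.

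The main obstacle is upgrading $L^2(\gamma)$ convergence to $\gamma$-a.e.\ convergence. Sending $m \to \infty$ in the previous bound gives the rate $\norm{g - g_n}_{L^2(\gamma)}^2 \leq 2e(V - V_n)$; since no a priori rate on $V - V_n \to 0$ is available, this delivers $\gamma$-a.e.\ convergence only along a subsequence $(n_k)$ chosen so that $V - V_{n_k}$ is summable (via Chebyshev and Borel--Cantelli). The full-sequence statement then requires controlling $\max_{n_k \leq n \leq n_{k+1}} |g_n - g_{n_k}|$; I expect this to fall to a Doob-type maximal estimate tailored to the bounded, partition-indexed sequence $(g_n)$---a short combinatorial argument specific to nested partitions that avoids the martingale convergence theorem itself. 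Once $g_n \to v$ $\gamma$-a.e.\ is established, the atomwise identity $f_{\pi_n}(\mu) = g_n/(1 - g_n)$ together with $\mu(\{u = 0\}) = 0$ yields $f_{\pi_n}(\mu) \to v/u = d\nu^a/d\mu$ $\mu$-a.e., completing the proof.
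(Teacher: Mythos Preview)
Your argument is correct in outline and takes a genuinely different route from the paper in its core technical tool. The paper does \emph{not} try to avoid the martingale convergence theorem here: it simply observes that $(f_{\pi_n}(\gamma))_n$ is a uniformly integrable $\gamma$-martingale, invokes Theorem~\ref{thm:mart conv} to get $\gamma$-a.e.\ convergence to some $h$, and then identifies $h=\frac{d\nu}{d\gamma}$ by piggy-backing on the entire proof of Theorem~\ref{RN} (the forward-convex-combination machinery of Lemma~\ref{L2fcc} together with the equality case of the qualitative Jensen Lemma~\ref{Jensen}). The final step---passing from $f_{\pi_n}(\gamma)$ to $f_{\pi_n}(\mu)$ via the continuous map $\psi(x)=x/(1-x)$---is exactly what you do. Your quantitative second-order Jensen bound $\int (f_{\pi'}-f_{\pi})^2\,d\gamma \le 2e\bigl[\int e^{-f_{\pi'}}d\gamma-\int e^{-f_{\pi}}d\gamma\bigr]$ is a sharper replacement for the paper's Lemma~\ref{Jensen}: it gives the $L^2(\gamma)$-Cauchy property and the identification of the limit (via the refinement $\pi_n\vee\{A,A^c\}$) in one stroke, without any appeal to Lemma~\ref{L2fcc} or to the already-proved Theorem~\ref{RN}. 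That is a cleaner and more self-contained path to $g_n\to v$ in $L^2(\gamma)$.

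The only soft spot is the upgrade to $\gamma$-a.e.\ convergence. The paper sidesteps this entirely by citing the martingale convergence theorem, so if you allow yourself that tool the proof is finished (and in fact shorter than the paper's). If you insist on avoiding it, your sketch does go through---choose $n_k$ with $V-V_{n_k}$ summable, get a.e.\ convergence along $(n_k)$ by Chebyshev and Borel--Cantelli, and control the intermediate oscillation by Doob's $L^2$ maximal inequality applied to the martingale $(g_n-g_{n_k})_{n_k\le n\le n_{k+1}}$, whose terminal $L^2$ norm is bounded by $2e(V_{n_{k+1}}-V_{n_k})$ via your variational inequality---but you should write this step out rather than leave it as ``I expect this to fall to\ldots''. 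Note that Doob's maximal inequality is itself a martingale result (albeit a more elementary one than the convergence theorem), so the phrase ``a short combinatorial argument specific to nested partitions'' slightly undersells what is being used.
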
 
Of course, if $\nu \ll  \mu$ then $(f_{\pi}(\mu))_{\pi\in \cP}$  is uniformly $\mu$-integrable and so the convergence $f_{\pi_n}(\mu)\to  \frac{d\nu^a}{d\mu}$ is also in $L^1(\mu)$.
It is not obvious how to justify the intuition behind the choice of $\pi_n$ in Theorem \ref{thm: conv seq}. This we do in Section \ref{se:analogies}, by introducing  an order which is closely linked to the martingale property; this point of view seems new, and proves fruitful.
On the other hand, once made the right guess for $\pi_n$, the
 proofs of Theorems \ref{RN} and \ref{thm: conv seq} are quite easy.

\begin{remark}
\label{rem: convergence for general nu}
In  \cite[Chapter 11, Section 17]{do12} it is shown that if $\pi_n \in \cP, n\in \bN$ is increasing (i.e. refining)  then 
\begin{align}
\label{}
f_{\pi_n} \to  \bE^\mu\left[\frac{d\nu^a}{d\mu}\Big|\tilde{\cF}\right] \mu \text{ a.e. as } n\to \infty ,
\end{align} 
 where $\tilde{\cF}:=\sigma(\cup_n \cF_{\pi_n})$ and $\bE^\mu[\cdot |\tilde{\cF}]$ denotes the conditional $\mu$-expectation w.r.t. $\tilde{\cF}$.  So,  while $\cF$ will in general\footnote{In particular this happens whenever $\cF$ is not separable.} be strictly bigger than  $\tilde{\cF}:=\sigma(\cup_n \cF_{\pi_n})$, with the $\pi_n$ as in Theorem \ref{thm: conv seq} one gets (in hindsight) that $\frac{d\nu^a}{d\mu}$ is $\tilde{\cF}$-measurable; this is the ultimate reason why it is enough to take the limits along the sequence we chose, instead of using the whole net. Notice that it is 
obvious\footnote{Any $\cF$-measurable $g:\Omega\to \bR$ is also $\tilde{\cF}$-measurable for some separable sigma algebra $\tilde{\cF} \subseteq  \cF$, because the Borel sets of $\bR$ form a separable  sigma algebra, and so also $\sigma(g)$ is separable.}
that $\frac{d\nu^a}{d\mu}$ is $\sigma(\cup_n \cF_{\pi_n})$-measurable for some choice of $\pi_n$, and so it was already known that $f_{\pi_n}(\mu) \to \frac{d\nu^a}{d\mu}$ $\mu$ a.e. for some choice of $(\pi_n)_n$. An added value that Theorem \ref{thm: conv seq} brings, is that it specifies $(\pi_n)_n$  explicitly via \eqref{ChoiceofSub}, asking not that the unknown quantity  $\frac{d\nu^a}{d\mu}$ be $\sigma(\cup_n \cF_{\pi_n})$-measurable, but rather that $(\pi_n)_n$ asymptotically maximizes the function $\pi \mapsto \int_{\Omega} e^{-f_\pi} d\gamma$, whose values can be calculated from the known quantities $\mu,\nu$. Another added value of Theorem \ref{thm: conv seq} is that it allows to give an elementary proof of Theorem \ref{RN}, as follows.
\end{remark}

As an application of Theorem \ref{thm: conv seq}, we modify the probabilistic proof  of Theorem \ref{RN} so that it uses convergence along a properly chosen  sequence instead of along a net. To appreciate the improvement, consider that proving Theorem \ref{RN} in the non-separable case is inconvenient enough that  \cite{JaPr03Pr} simply skips the proof of this more technical case, and \cite{Wi91} breaks the proof into the separable and general case, and says \emph{`Proving Part II of the theorem is a piece of 'abstract nonsense' [...] You might well want to take Part II for granted and skip the remainder of this section'}.

 In fact, we chose to prove Theorem \ref{RN}  \emph{without}\footnote{In this regard we mention that in  \cite{EdSu76Am} and \cite[Theorem 1.3.2]{EdSu92} it is shown how Theorem \ref{RN} can be proved without  the full power of the martingale convergence theorem, but rather relying on a related, less well known but more elementary convergence theorem for amarts (a.k.a.  asymptotic martingales). 
.}
  even using  the  martingale convergence theorem,  by noticing that instead of the $\mu$ a.e. convergence of the uniformly integrable sequence $(f_{\pi_n})_n$ ensured by Theorem \ref{thm: conv seq}, it is enough to prove the $L^1(\mu)$ convergence of a forward convex combination of $(f_{\pi_n})_n$ (which is easy). Using this weaker version of Theorem \ref{thm: conv seq} proved in the course of the proof of Theorem \ref{RN}, and the martingale convergence theorem, we then prove Theorem \ref{thm: conv seq}.
 While this approach lengthens the  proofs of Theorems \ref{RN} and \ref{thm: conv seq} a little, we obtain a proof of Theorem \ref{RN} which is completely elementary, just like its two most  popular proofs  (discussed in Section \ref{se:several proofs}), and is  purely analytic (no knowledge of probability theory is required).

\medskip

 Unsurprisingly, there exists several proofs of Theorem \ref{RN}. In Section \ref{se:several proofs} we go over the ones we are aware of. In Section \ref{se:several proofs} (and, at a different level in Section \ref{se:analogies}) we also  discuss some interesting analogies between our proof and the most popular proof of  Theorem \ref{RN}, which we hope leads to a deeper understanding of both proofs. Section \ref{se:proof} contains the proofs of Theorems \ref{RN} and \ref{thm: conv seq}, tersely  written, while Section \ref{se:analogies} discusses and clarifies the choice of the properly chosen  sequence $(\pi_n)_n$ in our proof. 
 
\section{Analogies between proofs of Theorem \ref{RN}.}
\label{se:several proofs}

The most popular proof  of Theorem \ref{RN} identifies  $f=\frac{d\nu^a}{d\mu}$  as the $\mu$-essential supremum (denoted with $\mu\text{-}esup$) of 
\begin{align}
\label{eq:fesssup}
\textstyle
L(\mu,\nu):=\{g\in L^1_+(\mu):  \int_A g d \mu  \leq \nu(A) \text{ for all  } A\in \cF\} ,
\end{align} 
i.e. the supremum of $L(\mu,\nu)$ seen as a subset of the space of equivalence classes $L^1(\mu)$.
Variants of this proof appear in the classic texts  \cite{fol99},  \cite{ha13}, \cite{Ro88Re}, \cite{DiDe02}, \cite{Bogachev}, \cite{Bil}, \cite{Yeh}, which however do not explicitly clarify the connection with the essential supremum; and \cite{ChTe03}, which does. A more complicated but somewhat related proof can be found  in \cite{WhZy77}.

The other popular proof of Theorem \ref{RN}  is due to Von Neumann and is based on the Riesz-Frechet Representation Theorem for the dual of the Hilbert space $L^2$; we refer to  \cite{Ru06}, \cite{Str98Co}, \cite{Kh07Pr}, \cite{Pedersen} for  variants of this proof. 

For proofs based on the theory of Riesz spaces, one can consult 
\cite{Za12In} and \cite{Lu79So}, which rely on Freudenthal's spectral theorem; or  \cite{Sc74Ba}, which relies on a characterization of order continuous forms on $L^\infty(\mu)$. 

In  \cite[Chapter 5, Items 56-57]{DeMeB} and \cite[Chapter 14, Section 13]{Wi91}, one can find the classic probabilistic proof of Theorem \ref{RN},  based on the martingale convergence theorem. A benefit of this proof is that it `gives easily the following extremely useful theorem, due to Doob' (see \cite[Chapter 5, Item 58]{DeMeB}, from which we took the quote): if $\mu_v,\nu_v$ are measures on $(\Omega,\cF)$ which depend measurably on some parameter $v$, \emph{and $\cF$ is separable}, then  one can choose a version $f(v,\cdot)$ of $d\nu_v^a/d\mu_v$ s.t. $f$ is jointly-measurable.

\medskip

Now let us explain (without assuming any knowledge of probability theory) the ideas behind   the most popular proof and the probabilistic proof, as to  highlight the close relationship between them (we highlight deeper analogies  in Section \ref{se:analogies}). When building the Lebesgue theory of integration, one approximates a positive function $f$ with the increasing sequence of step functions $0\leq f_n\leq f$  defined by 
$$f_n(x):=k/2^n \text{ for } x  \text{ s.t. } k/2^n\leq f(x) <(k+1)/2^n .$$ 
Correspondingly, one can approximate the measure $f\cdot \mu$ from below using $0\leq f_n\cdot \mu\leq f\cdot \mu$, where by definition  two real measures $\alpha,\beta$ satisfy $\alpha\leq \beta$ when $\alpha(A)\leq \beta(A)$ for all $A\in \cF$. Notice that  $f_n(x)\in D_n:=\{k/2^n: k\in \bN\}$ for all $x$. Just like $f_n$ is the supremum of all $D_n$-valued functions below $f$,  $f_n$ can be defined (up to $\mu$-null sets) using $f\cdot \mu$ instead of $f$, as the $\mu$-essential supremum of 
\begin{align}
L_n(\mu,\nu):=\{g\in L^1_+(\mu): g \in D_n \quad \mu \text{ a.e. },  \int_A g d \mu  \leq \int_A f d\mu \text{ for all  } A\in \cF\} , 
\end{align}
since two functions $g,h$ satisfy $g\leq h$ $\mu$ a.e. iff $g\cdot \mu \leq h\cdot \mu$.
Thus, if a measure $\nu$ satisfies $\nu=f\cdot \mu$, necessarily the functions 
\begin{align}
\label{eq:f_n esssup}
f_n:=\mu\text{-}esup \{g\in L^1_+(\mu): g \in D_n \quad \mu \text{ a.e. },  g \cdot \mu  \leq \nu \} , 
\end{align}
satisfy $0\leq f_n \uparrow f$ $\mu$ a.e.; so, if one does not know in advance whether $\nu$ is of the form $f \cdot \mu$, one simply has to check whether the limit $f$ of the increasing sequence of functions $f_n$ defined  by \eqref{eq:f_n esssup} satisfies $\nu=f\cdot \mu$. Clearly such $f$ equals $\mu\text{-}esup L(\mu,\nu)$, and the core of the most popular proof of Theorem \ref{RN} is indeed to show that such $f$ satisfies $\nu=f\cdot \mu$ if 
$\nu \ll \mu$. Thus, since  $g\cdot \mu \leq  \nu$ 
	implies\footnote{Let $B\in \cF$ be s.t. $\nu^s(B)=0=\mu(\Omega \setminus B)$, and $A\in \cF$. Then $\mu(A\setminus B)=0$ implies  $\nu^a(A\setminus B)=0$ and so
$$ \int_A g d \mu  = \int_{A \cap B} g d \mu  \leq \nu(A \cap B)=\nu^a(A\cap B)=\nu^a(A)  .$$
} 
	$g\cdot \mu \leq  \nu^a$, we have $L(\mu,\nu)=L(\mu,\nu^a)$, and so the identity $\frac{d\nu^a}{d\mu}=\mu\text{-}esup L(\mu,\nu)$ holds for any $\mu,\nu$.

\medskip

 An alternative approach to build $f$ s.t. $\nu= f \cdot \mu$ it to use the kind of approximation that one uses when building the Riemann integration theory. In this case we consider the family $\cP$  of all finite  partitions of $(\Omega,\cF)$. 
We can use  $\pi_:=(A_j)_j\in \cP$ to approximate $f$ by its local average 
\begin{align}
\label{eq: fpi def using f}
f_\pi=\sum_{j: \mu(A_j)>0} 1_{A_j} \frac{\int_{A_j} f d \mu}{\mu(A_j)} ;
\end{align} 
notice that $f_\pi$, like the $f_n$ defined in \eqref{eq:f_n esssup},  takes only finitely many values and can be defined using $\nu=f\cdot \mu$ instead of $f$ by setting 
\begin{align}
\label{eq: fpi def using nu}
 f_\pi:= f_\pi(\mu):=\sum_{j: \mu(A_j)>0} 1_{A_j} \frac{\nu(A_j)}{\mu(A_j)} .
\end{align} 
 Notice that we restrict  the measures  $\mu,\nu$  to the sigma algebra $\sigma(\pi)$ and then consider the Lebesgue decomposition of $\nu_{|\sigma(\pi)}$ into $(\nu_{|\sigma(\pi)})^a+(\nu_{|\sigma(\pi)})^s$, we find that  
\begin{align}
\label{eq:RN density}
\text{$f_{\pi}(\mu)= \frac{d(\nu_{|\sigma(\pi)})^a}{d\mu_{|\sigma(\pi)}}$;}
\end{align} 
in particular  if $\nu \ll \mu$ then 
 $f_\pi=\frac{d\nu_{|\sigma(\pi)}}{d\mu_{|\sigma(\pi)}}$,  since in this case  $\nu_{|\sigma(\pi)} \ll \mu_{|\sigma(\pi)}$. We'll need the following remark, which does not hold without the assumption $\nu \ll \mu$.

\begin{remark}
\label{rem:UI}
$(f_\pi)_{\pi\in \cP}$ is uniformly integrable if $\nu \ll \mu$.
\end{remark} 
\begin{proof} We will use  $f_\pi=\frac{d\nu_{|\sigma(\pi)}}{d\mu_{|\sigma(\pi)}}$ twice. Since $\nu \ll \mu$ and 
 $$ \mu(f_\pi\geq k)\leq \int_{\{ f_\pi\geq k\}} \frac{f_\pi}{k} d\mu \leq \int_{\Omega} \frac{f_\pi}{k} d\mu=\frac{\nu(\Omega)}{k} , $$ 
for every $ \epsilon>0$ there exists  $k$ s.t. $\int_{\{ f_\pi\geq k\}} f_\pi d\mu =\nu(f_\pi\geq k)<\epsilon$.
\end{proof}

One then has to prove that $f_\pi$ converges to $f=\frac{d\nu}{d\mu}$ when the partition $\pi$ becomes finer and finer, in some sense. One way of doing it is to suppose that $(\Omega,d)$ is a compact metric space and $\cF$ is the sigma-algebra of Borel sets, and to define the size of $\pi$ as $|\pi|:=\max_j \textup{diam}(A_j)$, where the diameter of $A \subseteq \Omega$ is defined as $\textup{diam}(A):=\sup_{x,y\in A} d(x,y)$. Notice that, since $\Omega$ is compact, it admits finite partitions $\pi_n$ s.t. $|\pi_n|\to 0$, and given any $f\in L^1(\mu)$ there exist   continuous $c_n$ s.t. $c^n \to f$ in $L^1(\mu)$. 
Since when $c$ is uniformly continuous it follows that $c_\pi\to c$ uniformly as $|\pi|\to 0$, applying the inequality $||h_\pi||_{L^1(\mu)}\leq ||h||_{L^1(\mu)}$ (valid for all $h\in L^1(\mu)$)  to $h=f- c^n$ we get from the triangle inequality
 $$ || f- f_\pi||_{L^1(\mu)}\leq 2|| f- c^n||_{L^1(\mu)} + || c^n - c^n_\pi||_{L^1(\mu)} , $$
which shows  that $f_\pi\to f$ in $L^1(\mu)$ as $|\pi|\to 0$ for any $f\in L^1(\mu)$. One would like however to work in abstract measure spaces; in this case, since there is no notion of size for $\pi$, it is not a priori clear what to do to replace the condition $|\pi|\to 0$. 
If  $\cF$ is separable, i.e. there exists a sequence of  sets $(B_j)_{j\in \bN}$ s.t. $\cF=\sigma((B_j)_{j\in \bN})$, we can consider  the unique partition\footnote{The elements of $\pi_n$ are the atoms of $\sigma((B_j)_{j=0}^n)$, and are the sets of the form $\cap_{j=0}^n C_j$ where $C_j\in \{B_j, \Omega \setminus B_j\}$.} 
$\pi_n$ of $\Omega$ s.t. $\sigma(\pi_n)=\sigma((B_j)_{j=0}^n)$, and then look at the limit of $f_{\pi_n}$ as $n\to \infty$.
This is where the martingale convergence theorem comes in, and ensures that $f_{\tilde{\pi}_n}$ is converging $\mu$ a.e. as $n\to \infty$ for any  \emph{refining} sequence of partitions $(\tilde{\pi}_n)_n$, and in particular for $\tilde{\pi}_n=\pi_n$.
As discussed\footnote{One has to specialize the result stated in Remark \ref{rem: convergence for general nu} to the present case, in which $\cF=\sigma(\cup_n \pi_n)$.} in Remark \ref{rem: convergence for general nu}, given any  $\nu,\mu$,    the $f_{\pi_n}(\mu)$ satisfy
\begin{align}
\label{eq:f_n conv f}
\textstyle
  f_{\pi_n}(\mu) \to \frac{d\nu^a}{d\mu} \quad  \mu \text{ a.e.}.
\end{align} 
 If $\nu \ll \mu$, this allows to prove Theorem \ref{RN} by simply  checking that the limit ($\mu$ a.e. and, by Remark \ref{rem:UI}, in $L^1(\mu)$) $f$ of the  $f_{\pi_n}(\mu)$  is such that $\nu =f\cdot \mu$; this is the core of the probabilistic proof of Theorem \ref{RN}. 
When $\cF$ is not separable however, there is no known equivalent to the interesting result  $f_{\pi_n}(\mu)\to \frac{d\nu^a}{d\mu}$ $\mu$ a.e., and  the traditional probabilistic proof of Theorem \ref{RN} becomes less elementary, as one needs to introduce the concept of $L^1(\mu)$-limit along the net $\cP$ of all partitions of $\Omega$ (ordered by refinement), and show that $(f_{\pi}(\mu))_{\pi\in \cP}$ converges to $\frac{d\nu}{d\mu}$ in $L^1(\mu)$ if $\nu \ll \mu$ (but not otherwise). 
  One reference which explains particularly well how to pass from sequences to nets is \cite[Lemma V-1-1, Proposition V-1-2]{Nev75}. The core of our proof is to avoid using nets altogether.

\section{Proof of Main Theorems}
\label{se:proof}

While we will not assume any knowledge of probability\footnote{In probability theory one considers only the case where $\mu$ has mass $1$; however, the notion of conditional expectation clearly works also for any positive finite $\mu$; in fact, one can even consider the sigma finite case, as done in \cite[Chapter 5.1.2]{Str10}.} theory, to write the proof we find it convenient to use its language, by borrowing the following concept.
Given $f,g\in L^1(\mu)$ and a $\sigma$-algebra $\cG\subseteq \cF$,    denote with $\mu_{|\cG}$ the restriction of $\mu$ to $\cG$. We  call $g$  \emph{the conditional expectation of $f$ given $\cG$} if $g$ is $\cG$-measurable and 
\begin{align}
\label{CondExpDef}
\int_A f d\mu=\int_A g d\mu \quad \text{ for all $A\in \cG$. }
\end{align} 
 Notice that such $g\in L^1(\Omega,\cG,\mu)$, 
	if\footnote{It immediately follows from Theorem  \ref{RN} that $\bE^\mu(f|\cG)$ always exists (indeed it equals $\frac{d\nu_{|\cG}}{d\mu_{|\cG}}$, where $\nu:=f\cdot \mu$), but of course we will not need to use this fact to prove Theorem  \ref{RN}.} 
it exists, is a.e. unique\footnote{If $\tilde{g} \in L^1(\Omega,\cG,\mu)$ satisfies \eqref{CondExpDef} then taking first $A=\{ \tilde{g}>g \}$ and then $A=\{ \tilde{g} <g \}$ in \eqref{CondExpDef} shows that $\mu(\{ \tilde{g} \neq g \})=0$.}; we denote it with $\bE^\mu(f|\cG)$. 
Notice also that  \eqref{CondExpDef}  holds for all $A\in \cG$ if it  holds for all $A$ in a family $\cH \subseteq \cG $ which contains $\Omega$, is closed under pairwise intersections and\footnote{We denote with $\sigma(\cH)$ the smallest $\sigma$-algebra containing $\cH$.}  $\sigma(\cH)=\cG$: indeed in this case $f \cdot \mu$ and $g \cdot \mu$ coincide on $\cH$ and so on $\cG$. Moreover,  $\exists \bE^\mu(f|\cG)=:g$ iff  $g\in L^1(\Omega,\cG,\mu)$ and 
\begin{align}
\label{CondExph}
\int hf d\mu=\int h g d\mu \qquad \text{ for all } h \in L^\infty(\Omega,\cG,\mu) :
\end{align} 
indeed, \eqref{CondExpDef} states that \eqref{CondExph} holds when $h$ is the indicator of a set in $\cG$, and this implies that it also holds  when $h$ is a linear combinations of such indicators, and so by dominated convergence it holds for every $\mu$ a.e. bounded $\cG$-measurable $h$.
Finally, notice that if $g^m=\bE^{\mu}[f^m|\cG]$ for every $m\in \bN$,  $f^m\to f$ and  $g^m\to g$ in $L^1(\mu)$ then $g=\bE^{\mu}[f|\cG]$.

\begin{lemma}
\label{Jensen}
Given $f\in L^1(\mu)$ and a $\sigma$-algebra $\cG\subseteq \cF$,  assume that $\exists \bE^{\mu}(f|\cG)=:g$ and $f\geq 0$. Then $g\geq 0 $, 
\begin{align}
\label{Jensenexp}
\int e^{-f} d\mu \geq \int e^{-g} d\mu , 
\end{align} 
and \eqref{Jensenexp} holds with equality  iff $f=g$.
\end{lemma}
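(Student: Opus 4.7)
The plan is to reduce this to a standard Jensen-type argument for conditional expectations, built on the tangent line inequality for the strictly convex function $\phi(x) := e^{-x}$, combined with the characterization \eqref{CondExph} of conditional expectation.

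\textbf{Step 1: Nonnegativity of $g$.} I would first take $A := \{g < 0\} \in \cG$ and use \eqref{CondExpDef}:  $\int_A g\, d\mu = \int_A f\, d\mu \geq 0$ since $f \geq 0$. But on $A$ the integrand $g$ is strictly negative, so this forces $\mu(A) = 0$, i.e.\ $g \geq 0$ $\mu$-a.e. This shows $g \geq 0$ and, importantly, makes $h := e^{-g}$ a $\cG$-measurable element of $L^\infty(\Omega, \cG, \mu)$ (bounded by $1$), which will be needed in Step 3.

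\textbf{Step 2: Pointwise tangent line inequality.} The function $\phi(x) = e^{-x}$ is strictly convex, so it lies above each of its tangent lines: for every $x, y \in \bR$,
\begin{equation*}
e^{-x} \geq e^{-y} + e^{-y}(y - x),
\end{equation*}
with equality if and only if $x = y$. Applying this pointwise with $x = f(\omega)$ and $y = g(\omega)$ yields
\begin{equation*}
e^{-f} - e^{-g} \geq e^{-g}\,(g - f) \qquad \mu\text{-a.e.}
\end{equation*}

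\textbf{Step 3: Integrate and apply the conditional expectation identity.} Since $h := e^{-g} \in L^\infty(\Omega, \cG, \mu)$, the characterization \eqref{CondExph} (with $h$ in place of the indicator) gives $\int e^{-g} f\, d\mu = \int e^{-g} g\, d\mu$. Integrating the inequality from Step~2 therefore produces
\begin{equation*}
\int e^{-f} d\mu - \int e^{-g} d\mu \;\geq\; \int e^{-g} g\, d\mu - \int e^{-g} f\, d\mu \;=\; 0,
\end{equation*}
which is \eqref{Jensenexp}.

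\textbf{Step 4: The equality case.} The implication $f = g \Rightarrow$ equality in \eqref{Jensenexp} is trivial. Conversely, suppose $\int e^{-f} d\mu = \int e^{-g} d\mu$. The nonnegative function $e^{-f} - e^{-g} - e^{-g}(g-f)$ then has zero integral, hence vanishes $\mu$-a.e., so the pointwise tangent inequality of Step~2 is an equality a.e. By the strict convexity of $\phi$ (the tangent inequality is strict unless $x = y$), this forces $f = g$ $\mu$-a.e. I do not anticipate a real obstacle here; the only place where one must be careful is to ensure $h = e^{-g}$ is genuinely bounded, which is exactly what Step~1 guarantees and why the nonnegativity clause of the lemma is included before \eqref{Jensenexp}.
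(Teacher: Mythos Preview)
Your proof is correct and follows essentially the same approach as the paper: both argue $g\geq 0$ via $A=\{g<0\}$, invoke the tangent-line inequality for the strictly convex $\phi(t)=e^{-t}$, integrate, and use \eqref{CondExph} with the bounded $\cG$-measurable weight $e^{-g}$ (the paper writes it as $\phi'\circ g=-e^{-g}$), then deduce the equality case from strict convexity. The paper additionally notes in a footnote that all four terms are $\mu$-integrable (since $0\le e^{-f},e^{-g}\le 1$ and $|e^{-g}f|\le f$, $|e^{-g}g|\le g$), which you rely on implicitly; otherwise the arguments coincide.
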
 

\begin{proof}
Since $\int_{\{ g<0\}} g d\mu=\int_{\{ g<0\}} f d\mu\geq 0$ it follows that $g\geq 0$. 
Since $\phi(t):=\exp(-t)$ is strictly convex, we have\footnote{Indeed $\phi^{'}$ is strictly increasing and so 
$\phi(t)- \phi(s) = \int_s^t  \phi'(u) du \geq \int_s^t \phi'(s)du= \phi'(s)  (t-s)$, with equality iff $s=t$.}
\begin{align}
\label{convexineq}
\phi(t)- \phi(s) - \phi'(s) (t-s) \geq 0,
\end{align} 
and \eqref{convexineq} holds with equality iff $t=s$.
Taking $s:=g(x),t:=f(x)$ in \eqref{convexineq} and then  integrating\footnote{Since $0\leq f,g \in  L^1(\mu)$, each of the four terms is $\mu$-integrable.} in $d\mu$, it follows from \eqref{CondExph} that \eqref{Jensenexp} holds (since $h:=\phi' \circ g$ is $\cG$-measurable and bounded). Moreover, since a positive function integrates to $0$ iff it equals $0$ a.e., \eqref{Jensenexp} holds with equality  iff $f=g$.
\end{proof}

The traditional probabilistic  proof of Theorem \ref{RN} uses  martingale convergence theorem, or more precisely the following corollary (see \cite[Chapter 14.1]{Wi91}). 
\begin{theorem}
\label{thm:mart conv}
Assume that $(\cF_n)_{n\in \bN}$ is an increasing\footnote{I.e. $\cF_n\subseteq \cF_{n+1}\subseteq \cF$ for all $n\in \bN$.} sequence of sub sigma algebras on $(\Omega, \cF)$, and for all $n\in \bN$,  $f_n\in L^1(\mu)$ is $\cF_n$-measurable and satisfy $\bE^{\mu}[f_{n+1}|\cF_n]=f_n$. If $(f_n)_n$ is uniformly integrable then it converges $\mu$ a.e. and in $L^1(\mu)$ to a $f\in L^1(\mu)$ which satisfies $\bE^{\mu}[f|\cF_n]=f_n$ for all $n\in \bN$.
\end{theorem}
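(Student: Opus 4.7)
The plan is to follow the classical route: first establish $\mu$-a.e. convergence of $(f_n)_n$ to a limit $f$, then upgrade to $L^1(\mu)$ convergence using uniform integrability, and finally verify that $\bE^\mu[f|\cF_n]=f_n$ by passing to the limit in the defining identity of conditional expectation.

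For the $\mu$-a.e. convergence I would invoke Doob's upcrossing inequality. Fix rationals $a<b$ and let $U_n([a,b])$ denote the number of upcrossings of $[a,b]$ completed by the finite sequence $(f_0,\dots,f_n)$. The upcrossing inequality, whose proof rests on constructing an increasing family of stopping times $\tau_1<\tau_2<\dots$ that alternately record when $(f_k)$ crosses below $a$ and then above $b$, and exploiting the martingale property through an optional-stopping/telescoping estimate, yields
\begin{align*}
(b-a)\int U_n([a,b])\, d\mu \ \leq\ \int (f_n-a)^+\, d\mu \ \leq\ \sup_n\int |f_n|\, d\mu+|a|,
\end{align*}
where the right-hand supremum is finite because uniform integrability implies $L^1(\mu)$-boundedness. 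Letting $n\to\infty$ and applying monotone convergence shows $U([a,b]):=\sup_n U_n([a,b])$ is $\mu$-a.e. finite. Since
\begin{align*}
\{\liminf_n f_n<\limsup_n f_n\} \ =\ \bigcup_{a,b\in\bQ,\, a<b}\{U([a,b])=\infty\}
\end{align*}
is then a $\mu$-null set, $f_n$ converges $\mu$-a.e. to some measurable $f:\Omega\to[-\infty,\infty]$, and Fatou's lemma combined with $L^1(\mu)$-boundedness gives $\int|f|\,d\mu\leq\sup_n\int|f_n|\,d\mu<\infty$, so $f\in L^1(\mu)$ and in particular $|f|<\infty$ $\mu$-a.e.

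Next, uniform integrability together with $\mu$-a.e. convergence yields $L^1(\mu)$ convergence via Vitali's convergence theorem: truncate each $f_n$ at a level $k$ at which the $L^1$-tails $\int_{\{|f_n|>k\}}|f_n|\,d\mu$ are uniformly (in $n$) small, apply dominated convergence on the bounded part, and let $k\to\infty$. To verify the identification $\bE^\mu[f|\cF_n]=f_n$, iterate the martingale property to get $\int_A f_m\, d\mu=\int_A f_n\, d\mu$ for every $A\in\cF_n$ and every $m\geq n$; sending $m\to\infty$ and using $L^1(\mu)$ convergence produces $\int_A f\,d\mu=\int_A f_n\,d\mu$ for all $A\in\cF_n$, which together with the $\cF_n$-measurability of $f_n$ is precisely the defining property of $\bE^\mu[f|\cF_n]$.

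The main obstacle is the upcrossing inequality itself: while morally evident (each completed upcrossing contributes at least $b-a$ to a positive quantity whose expectation is controlled by $\int(f_n-a)^+d\mu$), its rigorous proof requires the stopping-time/optional-stopping machinery mentioned above. Everything else (Fatou, Vitali, tower property) is standard, and the conditional-expectation identification is essentially forced once $L^1(\mu)$ convergence is in hand.
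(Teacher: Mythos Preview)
The paper does not give its own proof of this theorem: it states it as a known corollary of the martingale convergence theorem, with a reference to \cite[Chapter 14.1]{Wi91}, and deliberately avoids relying on it for the proof of Theorem~\ref{RN} (invoking it only later, in the proof of Theorem~\ref{thm: conv seq}). Your outline via Doob's upcrossing inequality, Fatou, Vitali, and passage to the limit in the conditional-expectation identity is precisely the classical argument found in the cited reference, and is correct. One cosmetic slip: since $\mu$ need only be finite (not a probability), the bound should read $\int(f_n-a)^+\,d\mu\le\int|f_n|\,d\mu+|a|\,\mu(\Omega)$ rather than $+|a|$; this does not affect the argument.
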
 
While relying on Theorem \ref{thm:mart conv} simplifies the proof of Theorem \ref{RN} a little,  this makes the proof non-elementary, and in particular not accessible to analysts. Thus, to get convergence we will rely instead  on the following generalization of the notion of subsequence. Given elements $f_i$ of a vector space, we say that $g$ is a convex combination of $(f_i)_{i\in I}$, and we write $g \in \co((f_{i})_{i\in I})$, if there exists a finite set  $J \subseteq I$  and  for each $i\in J$ an $a_i\geq 0$  such that $\sum_{i\in J} a_i=1$ and $g=\sum_{i\in J} a_i f_{i}$. 
We say that $(g_n)_{n\in \bN}$ is a \emph{forward convex combination} of $(f_n)_{n\in \bN}$, and we write $(g_n)_n \in \fcc((f_{n})_{n})$, if $g_n\in \co((f_k)_{k\geq n})$ for all $n\in \bN$.
Forward convex combinations are important because every subsequence is a forward convex combination, and (similarly to subsequences) satisfy the following two important properties. One, a forward convex combination of a forward convex combination of $(f_n)_{n\in \bN}$ is itself  a forward convex combination of $(f_n)_{n\in \bN}$, i.e. $(h_n)_n \in \fcc((g_{n})_{n})$  and $(g_n)_n \in \fcc((f_{n})_{n})$  imply $(h_n)_n \in \fcc((f_{n})_{n})$. Two, under some boundedness assumptions one can often pass to \emph{converging} forward convex combinations (even when there are no converging subsequences). In this regard, we will make use of the simplest possible result, given in the upcoming Lemma \ref{L2fcc}, a short and completely elementary proof\footnote{To be precise, the given references provide us with $L^2$ convergence. To get also a.e. convergence, we can simply pass to a subsequence.} of which, based on the Hilbert space structure,  can be found in \cite{BeScVe12}; alternatively, notice that Lemma \ref{L2fcc} is a special case of Mazur's lemma, but the proof of the latter relies on the use of the weak topology, and is thus less elementary.
\begin{lemma}
\label{L2fcc}
If $(f_n)_{n\in \bN} $ is bounded in $L^2(\mu)$ then there exists $(g_n)_n \in \fcc((f_{n})_{n})$ such that  $g_n$ converges $\mu$ a.e. and in $L^2(\mu)$.
\end{lemma}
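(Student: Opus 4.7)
The plan is to exploit the Hilbert space structure of $L^2(\mu)$, in particular the parallelogram identity, together with a minimization argument on the convex sets $\co((f_k)_{k\geq n})$.

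First, I would define $d_n := \inf \{ \|g\|_{L^2(\mu)} : g \in \co((f_k)_{k\geq n})\}$. Since the sets $\co((f_k)_{k\geq n})$ are nested decreasingly in $n$, the sequence $(d_n)_n$ is nondecreasing; by the $L^2$-boundedness of $(f_n)_n$ it is bounded above, and hence converges to some $d \in \bR$. For each $n$, pick $g_n \in \co((f_k)_{k\geq n})$ with $\|g_n\|_{L^2}^2 \leq d_n^2 + 1/n$. Then by construction $(g_n)_n \in \fcc((f_n)_n)$.

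Next, I would show $(g_n)_n$ is Cauchy in $L^2(\mu)$. For $m\leq n$, both $g_m, g_n$ lie in $\co((f_k)_{k\geq m})$, so their average does too, which gives $\|(g_m+g_n)/2\|_{L^2} \geq d_m$. The parallelogram law then yields
\begin{align*}
\|g_m - g_n\|_{L^2}^2 &= 2\|g_m\|_{L^2}^2 + 2\|g_n\|_{L^2}^2 - 4\bigl\|\tfrac{g_m+g_n}{2}\bigr\|_{L^2}^2\\
&\leq 2(d_m^2 + 1/m) + 2(d_n^2 + 1/n) - 4 d_m^2,
\end{align*}
and the right-hand side tends to $2d^2 + 2d^2 - 4d^2 = 0$ as $m,n\to\infty$. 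Hence $(g_n)_n$ converges in $L^2(\mu)$ to some $g$.

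Finally, since $L^2$-convergence implies convergence in measure, one can extract a subsequence $(g_{n_k})_k$ converging $\mu$ a.e. (and still to $g$ in $L^2(\mu)$). The key observation is that a subsequence of a forward convex combination is itself a forward convex combination: since $n_k \geq k$, we have $g_{n_k} \in \co((f_j)_{j\geq n_k}) \subseteq \co((f_j)_{j\geq k})$. Relabeling this subsequence as $(g_n)_n$ yields the claim. The main (and essentially only) obstacle is noticing that the parallelogram identity gives the right Cauchy estimate once one minimizes $L^2$-norms over the decreasing convex sets $\co((f_k)_{k\geq n})$; everything else is bookkeeping.
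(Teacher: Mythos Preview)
Your argument is correct and matches the approach the paper points to: the paper does not give its own proof of this lemma but refers to \cite{BeScVe12} for a short proof ``based on the Hilbert space structure'' (i.e.\ exactly the parallelogram/minimizing-norm argument you wrote out), and its footnote explicitly notes that a.e.\ convergence is obtained by passing to a subsequence, just as you do.
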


\begin{proof}[Proof of Theorem \ref{RN}.] The  proof of uniqueness of $\nu_a,\nu_s,f$ is standard, see e.g.  \cite[Theorem 6.10]{Ru06}. We will  prove below the existence of $f=\frac{d\nu}{d\mu}$ with values in $[0,1]$ under the  assumption that\footnote{We recall that by definition  two  measures $\alpha,\beta$ satisfy $\alpha\leq \beta$ when $\alpha(A)\leq \beta(A)$ for all $A\in \cF$.} $\nu\leq \mu$; we now derive from this the existence of the decomposition $\nu=\nu^a+\nu^s$ and  of  $f=\frac{d\nu^a}{d\mu}$ for general $\mu,\nu$, by following   \cite[Theorem  7.2.12]{Str98Co}. Since  $\mu+\nu\geq \nu$ there exists $h:\Omega\to [0,1]$ such that  $\nu=h\cdot (\mu + \nu)$, and so 
\begin{equation}
\label{eq:1-h}
\int_\Omega \phi(1 - h) d\nu = \int_\Omega \phi h d\mu  \quad \text{ for all $\phi\geq 0$.}
\end{equation}
Define the continuous bijection $\psi:[0,1]\to [0,\infty]$ by setting $\psi(x):=x/(1-x)$ for $x\in [0,1)$, and $\psi(1):=\infty$.
Set  $g:= \psi\circ h$. Given an arbitrary $\Gamma \in \cF$, apply \eqref{eq:1-h}  with\footnote{Notice that on $\{g<\infty\}=\{h<1\}$ the inverse of $1-h>0$ is well defined.} $\phi=\phi_\Gamma:=  1_{\{g<\infty\} \cap \Gamma} (1-h)^{-1}$ to get
$$ \nu(\Sigma \cap \Gamma) = \int_\Omega \phi_\Gamma(1 - h) d\nu = \int_\Gamma 1_{\{g<\infty\}}gd\mu  $$
i.e. $\nu(\{g<\infty\} \cap \cdot)=g 1_{\{g<\infty\}}\cdot \mu$. Now   \eqref{eq:1-h}  with $\phi := 1_{\{g=\infty\}}$ gives 
$$ \mu(\{g=\infty\}) = \int_{\{h=1\}}hd\mu = \int_{\{h = 1\}}(1-h)d\nu = 0,$$
so $\nu=\nu^s+\nu^a$ with
\begin{align}
\label{eq:nu^a and nu^s and g}
\nu^a:=\nu(\{g<\infty\} \cap \cdot)=g 1_{\{g<\infty\}}\cdot \mu , \quad \nu^s:=\nu((\Omega \setminus \{g<\infty\}) \cap \cdot).
\end{align} 
Let us now  prove the existence of a $[0,1]$-valued $f=\frac{d\nu}{d\mu}$  when $\nu\leq \mu$. We denote with $\cP$ the family of all finite\footnote{One could equivalently work with the family of countable measurable partitions, as the infinite sums which one encounters in the proof trivially converge.} measurable partitions of $\Omega$, and for all $i\in \cP$ let $\cF_i:=\sigma(i)$. Given $i,j\in \cP$, we say that $i\leq j$ if $j$ refines $i$, i.e. if $i \subseteq j$. Notice that given $i,j\in \cP$ there exists\footnote{The  elements of $k$ are the intersections of an element of $i$ with one of $j$.} their supremum $i \vee j$, which is the smallest $k\in \cP$ such that $i\leq k, j\leq k$. 
Given  $A\in \cF$ and $i\in \cP$, we denote with $i\vee A\in \cP$ the element $i\vee j$ where $j:=\{ A, \Omega \setminus A\}\in \cP$. 
For $i=\{A_k\}_{k=1,\ldots, m}\in \cP$, define $f_i$ by
setting 
\begin{align}
\label{eq: fi def using nu}
 f_i:= f_i(\mu):=\sum_{k} 1_{A_k} \frac{\nu(A_k)}{\mu(A_k)} ,
\end{align} 
where the sum is taken over the $k\in \{1,\ldots, m\}$'s such that $\mu(A_k)>0$.
 Notice that $f_i$ has values in $[0,1]$, is $\cF_i$-measurable and satisfies
\begin{align}
\label{nu=intfi}
 \textstyle{\nu(A)=\int_A f_i d\mu }
\end{align} 
 for all $A\in i$, and thus for all $A\in \cF_i$. 
 The idea of the classic probabilistic proof is to show that  $(f_{i})_i$ converges   in $L^1(\mu)$ to some $f$, so that 
\begin{align}
\label{nufig}
 \textstyle{\nu(A)=\int_A f_i d\mu=\int_A f d\mu  } 
\end{align} 
holds for all  $A\in \cF=\cup_i \cF_{i}$; however, since we  only want to deal with convergence along a sequence $(i_n)_n$, and in general\footnote{In particular whenever $\cF$ is not separable.} the inclusion $ \cF \subset \sigma(\cup_n \cF_{i_n})$ is strict, we have to be more careful.

Let $i,j\in \cP$, $j\geq i$, then  \eqref{nu=intfi} implies that $f_i=\bE^\mu[f_j|\cF_i]$, so by   Lemma \ref{Jensen} the   family $a_i:=\int e^{-f_i} d\mu, i\in \cP$ is increasing. Choose  $(i_n)_{n\in \bN} \subseteq  \cP$ such that 
\begin{align}
\label{ChoiceofSub}
\textstyle{a_{i_n}\uparrow \sup_{i\in \cP} a_i=:a.}
\end{align} 
Since $\cP$ is directed\footnote{A partially ordered set $(I,\leq)$ is said to be (upward-)directed if for any $i,j\in I$ there exists $k\in I$ s.t. $i\leq k$ and $j\leq k$.},  Lemma \ref{Jensen} implies that we can assume w.l.o.g\footnote{Otherwise replace $(i_n)_n$ with $(j_n)_n$ defined by induction by setting $j_0:=i_0$ and $j_{n+1}:=j_n\vee i_{n+1}$. Then  Lemma \ref{Jensen} implies that $a_{i_n} \leq a_{j_n} \leq a_{j_{n+1}}$, and so $a_{j_n}\uparrow a$.} that $(i_n)_n$ is increasing, and so such is $(\cF_{i_n})_{n}$.
 Using Lemma \ref{L2fcc} we now choose $(g_n)_n \in \fcc((f_{i_n})_{n})$ which converges $\mu $ a.e. to some $f$.
Since $f_{i_n}$ takes values in $[0,1]$, so do $g_{n}$ and $e^{-g_{n}}$; it follows that $g_{n} \to f$ and $e^{-g_{n}} \to e^{-f}$ in $L^1(\mu)$, and $f$ takes values in $[0,1]$. 
Now fix an arbitrary $A\in \cF$ and define $\tilde{f}_i:=f_{i \vee A}, \tilde{\cF}_i:=\cF_{i \vee A}$. 
Proceeding as before we can choose a $(\tilde{g}_n)_n\in \fcc((\tilde{f}_{i_n})_{n})$ such that $\tilde{g}_n$ converges $\mu$ a.e. and in $L^1(\mu)$ to some $\tilde{f}$, and  $e^{-\tilde{g}_{n}} \to e^{-\tilde{f}}$  in $L^1(\mu)$.
 Thus 
\begin{align}
\label{LimInt}
\int e^{-g_{n}} d \mu \to \int e^{-f} d \mu \quad \text{ and } \quad \int e^{-\tilde{g}_{n}} d \mu \to \int e^{-\tilde{f}} d \mu  .
\end{align} 
 Notice that $f_i=\bE^\mu[f_j|\cF_i]$ for $i\leq j$ implies that 
 \begin{align}
\label{ChainCondExp}
 f_{i_n}=\bE^\mu[\tilde{f}_{i_n}|\cF_{i_n}], \quad  \tilde{f}_{i_n}=\bE^\mu[\tilde{g}_{m}|\tilde{\cF}_{i_n}], \quad  f_{i_n}=\bE^\mu[g_{m}|\cF_{i_n}]  \quad \text{ for all $ m\geq n$} .
\end{align} 
 It follows from \eqref{ChainCondExp} and Lemma \ref{Jensen} that 
 \begin{align}
\label{chainineq}
\int e^{-f_{i_n}} d \mu \leq \int e^{-\tilde{f}_{i_n}} d \mu \leq \int e^{-\tilde{g}_{n}} d \mu , \qquad
\int e^{-f_{i_n}} d \mu \leq \int e^{-g_{n}} d \mu .
\end{align} 
Since $a_i\leq a$ for every $i$, the convexity of $e^{-\cdot}$ implies that 
$$ \int e^{-g_{n}} d \mu \leq a, \qquad \int e^{-\tilde{g}_{n}} d \mu \leq a ,$$
which together with $a_{i_n}\to a$, \eqref{LimInt} and \eqref{chainineq} give that 
\begin{align}
\label{EqualInt}
\int e^{-\tilde{f}} d \mu  =a= \int e^{-f} d \mu  .
\end{align} 
Taking $m\to \infty$ in \eqref{ChainCondExp} we get that 
\begin{align}
\label{ChainCondExp2}
f_{i_n}=\bE^\mu[\tilde{f}_{i_n}|\cF_{i_n}], \quad  \tilde{f}_{i_n}=\bE^\mu[\tilde{f}|\tilde{\cF}_{i_n}], \quad  f_{i_n}=\bE^\mu[f|\cF_{i_n}] ,
\end{align}  
and so the following equalities hold for every $B\in \cF_{i_n}\subseteq \tilde{\cF}_{i_n}$
\begin{align}
\label{fvvfif}
\int_B \tilde{f} d\mu= \int_B  \tilde{f}_{i_n} d\mu= \int_B  f_{i_n } d\mu=\int_B f d\mu .
\end{align} 
From \eqref{ChainCondExp2}  and $A\in \tilde{\cF}_{i_n}$ it follows that $\int_A \tilde{f} d\mu=\int_A  \tilde{f}_{i_n} d\mu$; thus using also   \eqref{nu=intfi} with $i:=i_n\vee A$ we get that
\begin{align}
\label{fvvAnu}
\int_A \tilde{f} d\mu=\nu(A) .
\end{align} 
Notice that \eqref{fvvfif} shows that $\int_B \tilde{f} d\mu=\int_B f d\mu $
holds for every $B$ in the algebra $\cH:=\cup_n \cF_{i_n}$; but then it holds for every $B\in \cG:=\sigma(\cup_n \cF_{i_n})$, and since $f\in L^1(\Omega, \cG,\mu)$ this means that $f=\bE^\mu[\tilde{f}|\cG]$.
 It then follows from \eqref{EqualInt} and Lemma \ref{Jensen}  that  $ \tilde{f}= f$, thus \eqref{fvvAnu} implies that $\int_A f d\mu=\nu(A) $. 
\end{proof}

\begin{remark}
One can modify the above proof of Theorem \ref{RN} to show, without using Theorem \ref{thm:mart conv},  that any uniformly integrable martingale $(M_i, \cF_i)_{i\in I}$ (indexed by an
	 arbitrary directed set $I$) is closed. 
	 This is done showing that there exists an increasing sequence of indices $i_n \in I, n\in \bN$ for which  $(M_{i_n})_{n\in \bN}$ admits a forward convex combination converging in $L^1$ to some $M_{\infty}$ which closes\footnote{Meaning that $M_i=\bE[M_\infty|\cF_i]$  for all $i\in I$.}  $(M_i, \cF_i)_{i\in I}$, by applying the following changes to the proof of Theorem \ref{RN}: replace Lemma \ref{L2fcc} with  \cite[Lemma 2.1]{BeScVe12}, and the function $\exp(- \cdot)$ in Lemma \ref{Jensen} with $\phi(t):=\int_0^t \tan^{-1}(x) dx$. Then the proof goes through unchanged, since  $\phi$ is strictly convex, even, and satisfies $\phi\geq 0$  and $|\phi'|\leq 1$, and   
$(\phi(M_i))_{i\in I}$ is uniformly integrable (and so such is $(g_n)_{n\in \bN}$  for any $g_n \in \fcc (\phi(M_{i_n}))_{n\in \bN}$) since $0\leq \phi(x)\leq |x|$.
\end{remark} 

\begin{proof}[Proof of Theorem \ref{thm: conv seq}]
By Remark \ref{rem:UI} the  sequence $(f_{i_n}(\gamma))_n$ is uniformly integrable, and by \eqref{nu=intfi} it satisfies
$\bE^{\mu}[f_{i_{n+1}}(\gamma)|\cF_n]=f_{i_n}(\gamma)$ for all $n$. 
  Theorem \ref{thm:mart conv} gives that $(f_{i_n}(\gamma))_n$ 
 converges $\gamma$ a.e.  to some $h$, and our proof of Theorem \ref{RN} shows that $(f_{i_n}(\gamma))_n$  admits a forward convex combination which converges in $L^1(\gamma)$ to $\frac{d\nu}{d\gamma}$. It follows that  $f(\gamma):=\frac{d\nu}{d\gamma}=h$ is the  limit $\gamma$ a.e. and in $L^1(\gamma)$ of $(f_{i_n}(\gamma))_n$. 
Let $\psi$ be the function defined  after \eqref{eq:1-h}, and  recall the fact, stated in  \eqref{eq:nu^a and nu^s and g}, that   $g(\gamma):= \psi\circ f(\gamma)$
satisfies  $g(\gamma)1_{\{g(\gamma)<\infty\}}=\frac{d\nu^a}{d\mu}$ and $\mu(\{g(\gamma)=\infty\})=0$. Analogously, using also \eqref{eq:RN density}, 
 $f_{i_n}(\gamma)=\frac{d\nu_{|i_n}}{d\gamma_{|i_n}}$ and  $g_{i_n}(\gamma):= \psi \circ f_{i_n}(\gamma) $ satisfy 
$$g_{i_n}(\gamma) 1_{\{g_{i_n}(\gamma)<\infty\}}=\frac{d(\nu_{|i_n})^a}{d\mu_{|i_n}}=f_{i_n}(\mu) .$$ 
Since $\psi$ is continuous, $g_{i_n}(\gamma)\to g(\gamma)$ $\gamma$ a.e. and thus, since $g(\gamma)<\infty$ $\mu$ a.e.,  
$$f_{i_n}(\mu)=g_{i_n}(\gamma) 1_{\{g_{i_n}(\gamma)<\infty\}} \to g(\gamma) 1_{\{g(\gamma)<\infty\}} =\frac{d\nu^a}{d\mu} \text{ $\mu$ a.e.. }$$ 
\end{proof}

\section{How to properly choose the sequence of partitions}
\label{se:analogies}

The following discussion  is meant to make more intuitive the choice of the sequence of partitions $(i_n)_n$ made in \eqref{ChoiceofSub} during our proof of Theorem \ref{RN}, and to highlight interesting similarities between our proof and the most popular proof. 
The $\mu$-essential supremum of a family $\cH$ of measurable functions from $\Omega$ to $[0,1]$ is simply the supremum of $\cH$ in the order of $L^1(\mu)$, a the space of equivalence classes.  This is built as the pointwise supremum of any sequence $(g_n)_n$  which  asymptotically maximizes the functional $g\mapsto \int_{\Omega} g d\mu$; moreover  the sequence can  w.l.o.g.  be assumed increasing
if, as is usually the case, $\cH$ is directed. 
Thus, the most popular proof of  Theorem \ref{RN}  builds $f=\frac{d\nu^a}{d\mu}$ as the $L^1(\mu)$ limit of any increasing sequence $g_n \in L(\mu,\nu), n\in \bN$ which  asymptotically maximizes the functional $g\mapsto \int_{\Omega} g d\mu$. Analogously, for $\nu \leq \mu$ our proof of Theorem \ref{RN} builds $f$ as the $L^1(\mu)$ limit of\footnote{To be precise, the way the proof is written shows only the convergence of a forward convex combination of $(f_{i_n})_n$; the fact that the whole sequence converges is due to Theorem \ref{thm:mart conv}.} $f_{i_n}, n\in \bN$, where   $i_n\in \cP, n\in \bN$ is increasing and is chosen such that $(f_{i_n})_n$  asymptotically maximizes the function $i\mapsto \int_{\Omega} e^{-f_i} d\mu$. 

Crucially for our proof, $(f_{i})_{i\in \cP}$  is  `increasing' in the following sense. Let $\cS$ be the family of all $ \sigma\text{-algebras}  $ $\cG \, \subseteq \cF$; we define an order on the set 
\begin{align}
\label{G}
\bG:=\{G:=(g,\cG): \cG \in \cS, g\in L^1(\Omega, \cG, \mu) \} 
\end{align} 
by saying that, given $G_i:=(g_i,\cG_i) \in \bG$,  $G_1\leq G_2$ holds if $\cG_1 \subseteq \cG_2$
 and $g_1=\bE^{\mu}[g_2|\cG_1]$. Then $\leq$ is a (partial) order on $\bG$, which induces the  order of inclusion of sets $\cG_1 \subseteq \cG_2$ on $\cS$, and the convex order $\leq_c$ on the image measures $\mu(g_1^{-1}(\cdot))\leq_c \mu(g_2^{-1}(\cdot))$. 
The above order on $\bG$ is important because saying that a map $H: \cP \ni i \mapsto (h_i,\cF_i) \in \bG$ 
 is increasing is simply a way of stating that  $M:=((h_i,\cF_i))_{i\in \cP} $  is a martingale, and then   $H$ being bounded above means that  $M$  is closed. Thus, $(f_{i})_{i\in \cP}$ is increasing in the following sense:  $f_i$ can be identified with $(f_i,\cF_i)$ where $\cF_i:=\sigma(i)$,  and $\cP \ni  i \mapsto (f_{i}, \cF_{i}) \in \bG$  is increasing. 
Our proof of Theorem \ref{RN} could then be rewritten in the above language, which would probably make the proof somewhat more intuitive, though not as transparent.

Since $i \mapsto (f_{i}, \cF_{i}) $ and  $\bG \ni (g,\cG)\mapsto \int_{\Omega} e^{-g} d\mu \in [0,1]$ are increasing (by Lemma \ref{Jensen}), their composition  $i\mapsto \int_{\Omega} e^{-f_i} d\mu$ is increasing, just like in  most popular proof the function $g\mapsto \int_{\Omega} g d\mu$ is  increasing, and this is why we choose them as functions to maximize. 
 
Just like  $f$ is  the supremum in $L^1(\mu)$ of  $L(\mu,\nu)$ (and of the increasing family $(f_n)_n$, defined in \eqref{eq:f_n esssup}), 
so also $(f,\cF)$ is the supremum in $\bG$ of $(f_{i}, \cF_{i})_{i\in \cP}$.
 When dealing with the order on $\bG$ instead of on $L^1(\mu)$, there are however two related complications. 
 One is that, while $f$ is always the supremum in $L^1(\mu)$ of $\{g_n\}_n$, $(f,\cF)$ does not in general equal the supremum $s$ in $\bG$ of $(f_{i_n}, \cF_{i_n})_n$; indeed,  $s=(f,\tilde{\cF})$, where $\tilde{\cF}:=\sigma(\cup_n \cF_{i_n})$; thus $(f,\cF)=s$ iff $\cF=\tilde{\cF}$, i.e. iff $\cF$ is countably generated and $i_n$ is chosen\footnote{Notice that such a choice is always compatible with the requirement of maximizing $F(g,\cG):=\int_{\Omega} e^{-g} d\mu $, since $F$ is increasing. More precisely, if $(i_n)_n$ and $(j_n)_n$ are increasing and satisfy $\cF=\sigma(\cup_n \cF_{i_n})$ and $F(f_{j_n},\cF_{j_n})\uparrow \sup_{i\in \cP} F(f_i,\cF_i)$,  then $k_n=i_n \vee j_n$ is increasing and satisfies $\cF=\sigma(\cup_n \cF_{k_n})$ and $F(f_{k_n},\cF_{k_n})\uparrow \sup_{i\in \cP} F(f_i,\cF_i)$, since $F(f_{i_n},\cF_{i_n}) \leq F(f_{k_n},\cF_{k_n})$. 
 Thus, there exists an appropriate choice of $(i_n)_n$ (so that $\cF=\tilde{\cF}$) iff $\cF$ is separable.} so that $\cF=\sigma(\cup_n \cF_{i_n})$. 
The other complication is that, while (when $0 \leq \nu \leq \mu$) the functions 
$$L^1(\mu) \in g \mapsto \int g d\mu \in [0,1] \quad \text{  and} \quad \cP \ni i \mapsto (f_i,\cF_i) \in \bG$$ are \emph{strictly} increasing,  $\bG \ni (g,\cG) \mapsto \int_{\Omega} e^{-g} d\mu \in [0,1]$
 is increasing but not strictly; instead, one relies on the weaker property expressed in Lemma \ref{Jensenexp}.

One could try to  make these two subtleties evaporate by endowing not $\bG$ but rather $L^1(\mu)$ with an order, defining $g_1\leq g_2$ if $\bE[g_2|\sigma(g_1)]=g_1$.
However, $\cP \ni i \mapsto f_i \in L^1(\mu)$ and  $\cP \ni i\mapsto \int_{\Omega} e^{-f_i} d\mu \in [0,1]$ are also increasing but not strictly, and it is important in our proof that $A\in \cF_{i \vee A}$, and since in general $A$ does not belong to $\sigma(f_{i \vee A})$, we have to consider the order on $\bG$, not on $L^1(\mu)$.

\medskip

{\bf Acknowledgment.} Oleksii Mostovyi is supported by the National Science Foundation under grant No. DMS-1848339 (2019-2024). Any opinions, findings and conclusions or recommendations expressed in this material are those of the author(s) and do not necessarily reflect the views of the National Science Foundation.

\bibliography{Radon_Nikodym}{}
\bibliographystyle{abbrv}







\end{document}